\title{A Conditional Reduction of the Rational Hodge Conjecture for Threefolds\\
and Deformation-Theoretic Verifications in Several Families}
\author{Karim Mansour\thanks{Email: \texttt{kahmed.math.coder@gmail.com}}}
\date{\today}
\theoremstyle{plain}
\newtheorem{theorem}{Theorem}[section]
\newtheorem{proposition}[theorem]{Proposition}
\newtheorem{lemma}[theorem]{Lemma}
\theoremstyle{definition}
\newtheorem{definition}[theorem]{Definition}
\newtheorem{remark}[theorem]{Remark}
\numberwithin{equation}{section}
\begin{document}

\maketitle

\begin{abstract}
We formulate a concrete geometric approximation hypothesis (Hypothesis BB) asserting that codimension-$2$ Hodge classes on a smooth projective threefold can be realized as specializations of families whose general members are complete-intersection curves. We prove that Hypothesis BB implies the (rational) Hodge conjecture for the threefold. We then give deformation-theoretic sufficient criteria (cohomology-vanishing and surjectivity conditions) which imply Hypothesis BB, and we prove these criteria hold for the class of a line on a \emph{general} quintic threefold containing that line. We further formulate and prove several propositions showing that, under natural Noether–Lefschetz / unobstructedness hypotheses, Hypothesis BB holds \emph{generically} in families of Calabi–Yau and Fano threefolds; these propositions reduce the problem to checkable conditions (normal-bundle cohomology, surjectivity of restriction maps). Finally we include a Macaulay2 appendix with scripts to verify the required cohomology and splitting conditions in explicit examples.
\end{abstract}

\tableofcontents

\section{Introduction}
The Hodge conjecture is wide open in dimension three. In this paper we isolate a concrete deformation-theoretic route to verifying the rational Hodge conjecture for a given smooth projective threefold \(X\).

The route is:

\begin{itemize}
\item Formulate Hypothesis BB: every codimension-$2$ Hodge class on \(X\) arises as the specialization of a family whose general members are complete-intersection curves on nearby smooth fibres of a one-parameter deformation of \(X\).
\item Prove that Hypothesis BB implies the rational Hodge conjecture for \(X\).
\item Give deformation-theoretic sufficient conditions which imply Hypothesis BB; these are explicit cohomological conditions (vanishing of obstruction groups, surjectivity of restriction maps) that are computationally checkable.
\item Prove that those cohomological conditions hold in several nontrivial and interesting cases (for a general quintic containing a fixed line; generically in families satisfying certain Noether–Lefschetz-type surjectivity assumptions).
\end{itemize}

Our aim is to provide a rigorous reduction that isolates precisely what geometric input is needed, and to give techniques and computational tools to verify that input in concrete geometric families.

\subsection*{Conventions}
All varieties are defined over \(\mathbb C\). For a closed embedding \(Y\subset Z\), \(N_{Y/Z}\) denotes the normal sheaf. Cohomology is singular (or analytic) cohomology with \(\mathbb Q\)-coefficients unless otherwise specified.

\section{Hypothesis \texorpdfstring{BB}{BB} and the conditional reduction}
\begin{definition}[Hypothesis BB]
Let \(X\) be a smooth projective threefold. We say Hypothesis BB holds for \(X\) if for every class \(\alpha\in H^{2,2}(X,\mathbb Q)\) there exists:
\begin{itemize}
\item a smooth pointed disk \((\Delta,0)\),
\item a flat projective morphism \(\pi:\mathcal X\to\Delta\) with \(\mathcal X_0\cong X\) and smooth general fibre \(\mathcal X_t\) for \(t\neq0\),
\item and a closed subscheme \(\mathcal Z\subset\mathcal X\) flat over \(\Delta\),
\end{itemize}
such that for all sufficiently small \(t\neq0\) the fibre \(\mathcal Z_t\subset\mathcal X_t\) is (a finite union of) complete-intersection curves in \(\mathcal X_t\), and the specialization \([\mathcal Z_0]=\alpha\) in \(H^4(X,\mathbb Q)\).
\end{definition}

\begin{theorem}[Conditional reduction]
\label{thm:conditional}
If Hypothesis BB holds for a smooth projective threefold \(X\), then every rational Hodge class in \(H^{2,2}(X,\mathbb Q)\) is algebraic.
\end{theorem}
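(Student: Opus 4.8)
The plan is to extract from the data furnished by Hypothesis~BB an honest algebraic $1$-cycle on $X$ whose cohomology class is the prescribed $\alpha$; algebraicity of $\alpha$ then follows from the very definition of an algebraic cohomology class, and since $\alpha\in H^{2,2}(X,\mathbb Q)$ is arbitrary this yields the rational Hodge conjecture for $X$. Concretely: fix $\alpha$, and apply Hypothesis~BB to obtain a pointed disk $(\Delta,0)$, a flat projective morphism $\pi\colon\mathcal X\to\Delta$ with $\mathcal X_0\cong X$, and a closed subscheme $\mathcal Z\subset\mathcal X$ flat over $\Delta$ with $[\mathcal Z_0]=\alpha$ in $H^4(X,\mathbb Q)$. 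The point to seize on is that the special fibre $\mathcal Z_0=\mathcal Z\times_\Delta\{0\}$ is already a closed subscheme of $\mathcal X_0=X$ itself; neither the smoothness of the nearby fibres $\mathcal X_t$ nor the complete-intersection structure of $\mathcal Z_t$ for $t\neq0$ plays any role in this theorem. (Those properties are what later make Hypothesis~BB itself amenable to deformation theory.)

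First I would record that, $\mathcal Z$ being flat and projective over the connected one-dimensional base $\Delta$, its Hilbert polynomial is constant along $\Delta$, so $\mathcal Z_0$ is a closed subscheme of $X$ of dimension at most $1$. Let $Z_0\in Z_1(X)$ be the effective algebraic $1$-cycle carried by the one-dimensional part of $\mathcal Z_0$---the sum of its $1$-dimensional irreducible components taken with their scheme-theoretic multiplicities. By the standard description of the degree-$4$ fundamental class of a closed subscheme of the smooth projective threefold $X$ one has $[\mathcal Z_0]=\mathrm{cl}(Z_0)$ in $H^4(X,\mathbb Q)$, where $\mathrm{cl}$ denotes the cycle class map; this compatibility is also what makes the symbol ``$[\mathcal Z_0]$'' in Hypothesis~BB unambiguous. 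By definition $\mathrm{cl}(Z_0)$ is an algebraic cohomology class, and combining with the identity $[\mathcal Z_0]=\alpha$ of Hypothesis~BB gives $\alpha=\mathrm{cl}(Z_0)$. Hence $\alpha$ is algebraic.

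Equivalently, phrased through the word ``specialization'': one takes the cycle class $[\mathcal Z]\in H^4(\mathcal X,\mathbb Q)$ of the codimension-$2$ subscheme $\mathcal Z$ of the fourfold $\mathcal X$ and restricts it along the fibre inclusions. Each $\mathcal X_t\hookrightarrow\mathcal X$ is a Cartier divisor, and flatness of $\mathcal Z$ over $\Delta$ makes $\mathcal Z\cap\mathcal X_t$ a proper intersection of the expected dimension $1$, so the Gysin restriction of $[\mathcal Z]$ to $\mathcal X_t$ equals the cycle class of $\mathcal Z_t$ for every $t$, including $t=0$. Restricting to a general fibre exhibits this class as carried by the algebraic cycle $\mathcal Z_t$; restricting to the special fibre identifies it with $\alpha$. (After shrinking $\Delta$ one may even take $\pi$ smooth, since flatness together with smoothness of all fibres in a neighbourhood of $\mathcal X_0$ forces smoothness of $\pi$ there; then $R^4\pi_*\mathbb Q$ is a trivial local system on the contractible disk and the equality $\alpha=\lim_{t\to 0}[\mathcal Z_t]$ holds literally---but the direct argument of the previous paragraph uses only that $X$ is smooth projective.)

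I do not expect a genuine obstacle here: once Hypothesis~BB is granted the statement is essentially formal, because algebraicity of a cohomology class is insensitive to whatever singularities or embedded points a cycle representing it may carry, and a flat limit of cycles over a smooth base is again an honest algebraic cycle on the special fibre. The only items requiring care are the two cycle-class compatibilities flagged above---the degree-$4$ fundamental class of a subscheme versus the cycle class of its associated cycle, and Gysin restriction along a Cartier divisor meeting $\mathcal Z$ properly---both entirely standard. The real mathematical content of the paper lies not here but in verifying Hypothesis~BB in concrete families, to which the remaining sections are devoted.
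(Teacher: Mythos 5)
Your proposal is correct, and it is in fact tighter than the paper's own argument while resting on the same underlying fact. The paper phrases the proof as a limit statement: $[\mathcal Z_0]$ is the limit of the algebraic classes $[\mathcal Z_t]$ in the Gauss--Manin local system, and one then appeals to ``specialization of algebraic cycles is algebraic'' to conclude. You bypass the limit language entirely by observing that the special fibre $\mathcal Z_0$ is already a closed subscheme of $X=\mathcal X_0$, of dimension $\le 1$ by flatness, so that $[\mathcal Z_0]$ equals the cycle class of the associated $1$-cycle $Z_0$ and is algebraic by definition; the smoothness of nearby fibres and the CI structure of $\mathcal Z_t$ are, as you say, irrelevant to this theorem. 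This is the precise content hiding behind the paper's appeal to the Chow/Hilbert universal family, and your version has the advantage of isolating the only two facts actually needed (fundamental class of a subscheme versus class of its associated cycle, and---for your alternative ``specialization'' phrasing---Gysin restriction along the Cartier divisor $\mathcal X_t\subset\mathcal X$ meeting $\mathcal Z$ properly), whereas the paper's formulation leaves the meaning of ``limit of algebraic classes'' and its algebraicity somewhat informal. What the paper's phrasing buys is motivational continuity with the Noether--Lefschetz/IVHS picture used in the later sections, where the nearby fibres genuinely matter; your second paragraph recovers that picture rigorously anyway, and also settles the potential ambiguity in how Hypothesis~BB's symbol $[\mathcal Z_0]$ is to be read, since both readings (fundamental class of the flat limit, or restriction of $[\mathcal Z]$ to the special fibre) coincide under your Gysin argument.
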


\begin{proof}
Let \(\alpha\in H^{2,2}(X,\mathbb Q)\). By Hypothesis BB we have \(\mathcal X\to\Delta\) and \(\mathcal Z\subset\mathcal X\) as above with \(\mathcal Z_t\) algebraic for \(t\neq0\) and \([\mathcal Z_0]=\alpha\). The cycle class map is compatible with restriction to fibres; hence \([\mathcal Z_0]\) is the limit (in the Gauss–Manin local system / classical topology) of algebraic classes \([\mathcal Z_t]\). Because algebraic classes in cohomology form a \(\mathbb Q\)-vector subspace and specialization of algebraic cycles is algebraic (Chow/Hilbert universal family), the limit class \([\mathcal Z_0]\) is algebraic. Thus \(\alpha\) is algebraic. This proves the rational Hodge conjecture for \(X\).
\end{proof}

\section{Deformation-theoretic sufficient criteria for Hypothesis BB}
We now give practical sufficient conditions which imply Hypothesis BB for a given curve class represented by a curve \(C\subset X\). These are standard deformation-theory statements stated to be checkable.

\begin{lemma}[Curve unobstructedness]\label{lem:curve-unobstructed}
Let \(C\subset X\subset\mathbb P^4\) be a smooth curve which appears as a residual/intermediate curve in an ambient complete-intersection linkage. If
\[
H^1(C,N_{C/\mathbb P^4})=0 \quad\text{and}\quad H^1(C,N_{C/X})=0,
\]
then embedded deformations of \(C\) in the ambient \(\mathbb P^4\) and inside \(X\) are unobstructed; in particular \(C\) deforms compatibly with small deformations of the ambient surfaces and of \(X\).
\end{lemma}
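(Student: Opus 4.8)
The plan is to identify the two hypotheses with the vanishing of the obstruction groups in the standard tangent--obstruction theory of Hilbert schemes, deduce the ``absolute'' unobstructedness statements directly, and then upgrade them to the ``relative'' ones by the usual relative Hilbert-scheme argument. Recall that for a smooth curve $C$ (hence a local complete intersection) inside a smooth variety $Z$, the local $\mathrm{Ext}$-sheaf terms in the local-to-global spectral sequence computing $\mathrm{Ext}^{\bullet}_{\mathcal O_C}(\mathcal I_{C/Z}/\mathcal I_{C/Z}^{2},\mathcal O_C)$ vanish, so the Zariski tangent space to $\mathrm{Hilb}(Z)$ at $[C]$ is $H^0(C,N_{C/Z})$ and the obstructions to extending an infinitesimal embedded deformation lie in $H^1(C,N_{C/Z})$. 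Taking $Z=\mathbb P^4$, the hypothesis $H^1(C,N_{C/\mathbb P^4})=0$ says $\mathrm{Hilb}(\mathbb P^4)$ is smooth at $[C]$, so embedded deformations of $C$ in $\mathbb P^4$ are unobstructed; taking $Z=X$, the hypothesis $H^1(C,N_{C/X})=0$ says $\mathrm{Hilb}(X)$ is smooth at $[C]$, so embedded deformations of $C$ inside $X$ are unobstructed. Both hypotheses are genuinely needed: the normal-bundle sequence $0\to N_{C/X}\to N_{C/\mathbb P^4}\to N_{X/\mathbb P^4}|_C\to 0$ shows that $H^1(C,N_{C/\mathbb P^4})=0$ alone need not force $H^1(C,N_{C/X})=0$, because $H^0(C,N_{C/\mathbb P^4})\to H^0(C,N_{X/\mathbb P^4}|_C)$ need not be surjective.

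For compatibility with a deformation of $X$, let $\pi:\mathcal X\to\Delta$ be a flat family with $\mathcal X_0=X$. Since $X$ is smooth and $\pi$ is flat, $\pi$ is smooth in a neighbourhood of $X$; hence $\mathcal X$ is smooth there, $X=\pi^{-1}(0)$ is an effective Cartier divisor, and $N_{X/\mathcal X}\cong\mathcal O_X$. Form the relative Hilbert scheme $\mathrm{Hilb}(\mathcal X/\Delta)\to\Delta$; its fibre over $0$ is $\mathrm{Hilb}(X)$, and it contains the point $[C]$. By the standard relative Hilbert-scheme deformation theory, the relative tangent and obstruction spaces at $[C]$ are the fibrewise groups $H^0(C,N_{C/X})$ and $H^1(C,N_{C/X})$; hence $H^1(C,N_{C/X})=0$ makes $\mathrm{Hilb}(\mathcal X/\Delta)\to\Delta$ smooth at $[C]$, and after shrinking $\Delta$ one obtains a closed subscheme $\mathcal C\subset\mathcal X$, flat over $\Delta$, with $\mathcal C_0=C$, so that each $\mathcal C_t\subset\mathcal X_t$ is a deformation of $C$. (Concretely, the normal-bundle sequence $0\to N_{C/X}\to N_{C/\mathcal X}\to\mathcal O_C\to 0$ together with $H^1(C,N_{C/X})=0$ yields the surjection $H^0(C,N_{C/\mathcal X})\twoheadrightarrow H^0(C,\mathcal O_C)$ that produces first-order deformations of $C$ moving off the central fibre, and all higher obstructions to integrating these vanish in $H^1(C,N_{C/X})=0$.) The same reasoning, with $\Delta$ replaced by the parameter space of the ambient surfaces and of the divisors cutting out the given linkage, shows that $C$ persists in a flat family as those data deform; the relevant obstructions are again $H^1$ of the normal sheaves of $C$ and of the complete intersection to which $C$ is residual, and they vanish by the two hypotheses via the associated normal-bundle sequences.

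I do not anticipate a serious obstacle, since the statement is essentially a repackaging of standard deformation theory; the two points that must be handled with care are these. First, the \emph{relative} obstruction space of $\mathrm{Hilb}(\mathcal X/\Delta)\to\Delta$ at $[C]$ must be identified with $H^1(C,N_{C/X})$ and not with the a priori larger group $H^1(C,N_{C/\mathcal X})$, which surjects onto $H^1(C,\mathcal O_C)$ and hence can be nonzero whenever $C$ has positive genus; the resolution is that deforming $C$ within the fibres of $\pi$ is governed by the normal sheaf inside a fibre rather than inside the total space. Second, one must invoke the local-complete-intersection hypothesis to kill the local $\mathrm{Ext}$-sheaf terms and secure $\mathrm{Ext}^{i}_{\mathcal O_C}(\mathcal I_{C/Z}/\mathcal I_{C/Z}^{2},\mathcal O_C)=H^{i}(C,N_{C/Z})$ for $Z=\mathbb P^4$ and $Z=X$; here this is automatic because $C$ is assumed smooth. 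With these two observations the proof is a direct assembly of the cited facts.
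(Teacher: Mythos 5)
Your proposal is correct and takes essentially the same route as the paper: obstructions to embedded deformations lie in \(H^1(C,N_{C/Z})\) (Sernesi), so the hypothesized vanishings give smoothness of the absolute and relative Hilbert schemes at \([C]\) and hence integrability to actual families. In fact you supply considerably more detail than the paper's one-line proof, notably the careful identification of the relative obstruction space of \(\mathrm{Hilb}(\mathcal X/\Delta)\to\Delta\) with \(H^1(C,N_{C/X})\) rather than \(H^1(C,N_{C/\mathcal X})\), a point the paper leaves implicit.
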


\begin{proof}
Obstructions to embedded deformations of a subscheme \(Y\subset Z\) lie in \(H^1(Y,N_{Y/Z})\) (Sernesi); the asserted vanishings give unobstructedness in the indicated ambient and relative settings; integration of first-order deformations provides actual families.
\end{proof}

\begin{lemma}[Surface lift and control of restricted equation]\label{lem:surface-lift}
Let \(S\subset\mathbb P^4\) be a smooth surface meeting a quintic \(X\) transversely along a curve \(C=S\cap X\). Consider the exact sequence
\[
0\to N_{S/X}\to N_{S/\mathbb P^4}\xrightarrow{\rho}\mathcal O_S(5)\to 0.
\]
If \(H^1(S,N_{S/\mathbb P^4})=0\) and the induced map on global sections \(\rho_*:H^0(S,N_{S/\mathbb P^4})\to H^0(S,\mathcal O_S(5))\) is surjective, then any first-order perturbation of the restriction of the defining quintic equation to \(S\) can be realized by deforming \(S\) in \(\mathbb P^4\), and unobstructedness lifts it to an actual deformation.
\end{lemma}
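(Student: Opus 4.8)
The plan is to read the displayed sequence as a dictionary between embedded deformations of \(S\) and perturbations of the restricted equation, and then to solve a linear (first-order) problem followed by an integration step. Write \(X=V(F)\). First I would recall (Sernesi, as in the proof of Lemma~\ref{lem:curve-unobstructed}) that \(H^0(S,N_{S/\mathbb P^4})\) is the tangent space at \([S]\) to the Hilbert scheme of \(\mathbb P^4\) --- the space of first-order embedded deformations of \(S\) --- and that \(H^1(S,N_{S/\mathbb P^4})\) houses the obstructions to extending them. Under these identifications the map \(\rho_*\colon H^0(S,N_{S/\mathbb P^4})\to H^0(S,\mathcal O_S(5))\) records the first-order variation of the restricted quintic \(F|_S\) under a normal deformation of \(S\): this is the content of the displayed sequence \(0\to N_{S/X}\to N_{S/\mathbb P^4}\xrightarrow{\rho}\mathcal O_S(5)\to 0\), with \(\ker\rho\) the directions along which \(F|_S\) is unchanged to first order (heuristically, those that slide \(C=S\cap X\) within \(X\)). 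The one genuinely structural point needed here, as opposed to invocations of standard deformation theory, is to pin down that \(\rho_*\) really \emph{is} this variation map, so that solving \(\rho_*(v)=g\) produces a surface on which \(F\) restricts to the prescribed perturbed form.

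Granting this, the first-order realization is immediate. Given a perturbation \(F|_S+\epsilon g\) of the restricted quintic, with \(g\in H^0(S,\mathcal O_S(5))\), surjectivity of \(\rho_*\) furnishes \(v\in H^0(S,N_{S/\mathbb P^4})\) with \(\rho_*(v)=g\); the first-order embedded deformation of \(S\) attached to \(v\) then induces exactly the perturbation \(g\) of \(F|_S\). Here it matters that \(\mathcal O_{\mathbb P^4}(5)\) is pulled back from \(\mathbb P^4\): along any deformation of \(S\) it restricts to the line bundle \(\mathcal O(5)\) on the moving surface, so ``\(F|_{S'}\)'' is always a section of one fixed relative line bundle and the comparison defining \(\rho_*(v)\) is canonical.

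Next I would integrate. Since \(H^1(S,N_{S/\mathbb P^4})=0\), the Hilbert scheme is smooth at \([S]\), of dimension \(h^0(S,N_{S/\mathbb P^4})\); hence the tangent vector \(v\) is realized by an actual arc through \([S]\), i.e.\ a deformation \(\pi\colon\mathcal S\to\Delta\) with \(\mathcal S_0=S\). Restricting \(\mathcal O_{\mathbb P^4}(5)\) and the section \(F\) to \(\mathcal S\) yields a relative line bundle and a section whose fibrewise vanishing loci \(C_t=\mathcal S_t\cap X\) form a flat family of curves with \(C_0=C\), and the first-order term of \(F|_{\mathcal S_t}\) at \(t=0\) is \(g\); thus the prescribed perturbation is realized by an honest deformation of \(S\) in \(\mathbb P^4\). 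For the sharper statement that every \(g\) near \(F|_S\) is \emph{exactly} (not only to first order) of the form \(F|_{S'}\) for a nearby surface \(S'\), one upgrades this by an implicit-function/Nakayama argument on the smooth Hilbert scheme: near \([S]\) the map \([S']\mapsto F|_{S'}\) into the relevant space of restricted forms has surjective differential \(\rho_*\), hence is a submersion onto a neighbourhood of \(F|_S\). Combined with Lemma~\ref{lem:curve-unobstructed}, which gives unobstructedness of \(C\) in \(\mathbb P^4\) and in \(X\), this produces the compatible deformation of \(C\) that the verification of Hypothesis~BB needs downstream.

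The step I expect to be the main obstacle is exactly this passage from the infinitesimal solution to a family realizing the perturbation. The first-order statement is a one-line consequence of surjectivity of \(\rho_*\), but to get an actual deformation one needs \emph{both} \(H^1(S,N_{S/\mathbb P^4})=0\) (smoothness of the Hilbert scheme, i.e.\ unobstructedness of \(S\)) \emph{and} the submersion argument that promotes surjectivity of the differential to surjectivity of the restricted-equation map on a neighbourhood; tracking the (here harmless) variation of \(\mathcal O(5)\) and of base points along the family is where the bookkeeping lives. The two hypotheses of the lemma are precisely tuned to kill the obstruction in each sub-step: \(H^1=0\) for the extension/integration, surjectivity of \(\rho_*\) for solving the linearized equation and for the submersion.
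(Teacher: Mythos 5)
Your argument is correct and is essentially the paper's proof, only written out in full: the paper likewise takes global sections of the displayed sequence to realize the prescribed perturbation at first order via surjectivity of \(\rho_*\), and uses \(H^1(S,N_{S/\mathbb P^4})=0\) (smoothness of the Hilbert scheme at \([S]\)) to integrate to an actual deformation. Your added details --- identifying \(\rho_*\) as the variation of \(F|_S\) and the submersion step --- are elaborations of the same route, not a different one.
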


\begin{proof}
Immediate from taking global sections of the displayed exact sequence and using the vanishing of \(H^1(S,N_{S/\mathbb P^4})\) to integrate first-order deformations.
\end{proof}

\begin{lemma}[Persistence of transversality]\label{lem:transversality}
Let \(S_1,S_2,X\subset\mathbb P^4\) be smooth and suppose their intersection is transverse along a smooth curve \(C=S_1\cap S_2\cap X\). Then small deformations \((S_{1,t},S_{2,t},X_t)\) remain transverse and the intersection \(C_t=S_{1,t}\cap S_{2,t}\cap X_t\) is smooth for small \(t\).
\end{lemma}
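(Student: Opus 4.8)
The plan is to prove this as an openness statement for transversality, upgraded to a genuine statement over a small disk by a properness argument. First I would package the deformations: let $(\Delta,0)$ be a small disk and let $\mathcal S_1,\mathcal S_2,\mathcal X\subset\mathbb P^4\times\Delta$ be flat projective families with central fibres $S_1,S_2,X$ and with $S_{i,t},X_t$ smooth for all $t$ near $0$ --- smoothness of the fibres is itself an open condition on $\Delta$, since the relative singular locus is closed in $\mathbb P^4\times\Delta$ and proper over $\Delta$. Write $\mathcal C=\mathcal S_1\cap\mathcal S_2\cap\mathcal X$ (scheme-theoretic intersection), a closed subscheme of $\mathbb P^4\times\Delta$, proper over $\Delta$, with $\mathcal C_0=C$.

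Next I would localize the transversality condition. Near a point $p\in C$ pick local analytic equations $f_1,f_2,g$ on $\mathbb P^4\times\Delta$ cutting out $\mathcal S_1,\mathcal S_2,\mathcal X$; by flatness over $\Delta$ their restrictions to each fibre still cut out $S_{i,t},X_t$. Transversality of $S_1,S_2,X$ along $C$ means exactly that at every $p\in C$ the Jacobian matrix of the central-fibre equations has maximal rank (equal to the number of equations), equivalently that the three conormal directions are linearly independent there. Maximal rank of a matrix of holomorphic functions is an open condition on the source, so
\[
U=\{(q,t)\in\mathbb P^4\times\Delta:\ S_{1,t},S_{2,t},X_t\text{ meet transversally at }q\}
\]
is open in $\mathbb P^4\times\Delta$, and the hypothesis says $U\supseteq C\times\{0\}$.

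Then I would make the deformation ``small'' by properness. Since $\mathcal C\setminus U$ is closed in $\mathcal C$ and $\mathcal C$ is proper over $\Delta$, the image of $\mathcal C\setminus U$ in $\Delta$ is closed; it misses $0$ because $\mathcal C_0=C\subset U$, so after shrinking $\Delta$ we may assume $\mathcal C\subset U$. Now for every $t\in\Delta$ the fibre $C_t=\mathcal C_t$ is, at each of its points, a transverse intersection of the smooth varieties $S_{1,t},S_{2,t},X_t$; by the Jacobian criterion $C_t$ is smooth of the expected dimension, hence a disjoint union of smooth curves, and $\mathcal C\to\Delta$ has equidimensional fibres. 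Since $\mathcal C$ is a local complete intersection of the expected codimension in the smooth space $\mathbb P^4\times\Delta$ it is Cohen--Macaulay, so miracle flatness over the regular base $\Delta$ shows $\mathcal C\to\Delta$ is flat; in particular no component of $C_t$ is created away from $C$ or lost. This gives the assertion of the lemma.

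The one real subtlety --- and the step I would be most careful about --- is precisely the passage from ``transverse along $C$'' to ``transverse everywhere on $\mathcal C$'': a priori a small deformation of the triple could develop extra intersection points, far from $C$, at which transversality fails, and then $C_t$ would be singular there. This is ruled out only because $\mathcal C$ is proper over $\Delta$, so the bad locus $\mathcal C\setminus U$ has closed image in $\Delta$ and can be avoided by shrinking $\Delta$; no positivity or curvature hypothesis on $S_i,X$ enters. A minor companion point, already used above, is that ``transverse at a point'' is compatible with restricting the relative equations to fibres, which is immediate from flatness of the families over $\Delta$.
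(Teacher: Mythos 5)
Your proof is correct and rests on the same core idea as the paper's one-line argument: transversality is an open condition, detected by maximal rank of the Jacobian of local defining equations in the family. Where you go beyond the paper is exactly the point you flag as the real subtlety: openness along \(C\times\{0\}\) only gives transversality near \(C\), and the paper's proof never addresses the possibility of new, non-transverse intersection points of \((S_{1,t},S_{2,t},X_t)\) appearing away from \(C\); your properness argument (the bad locus \(\mathcal C\setminus U\) is closed in \(\mathcal C\), which is proper over \(\Delta\), and its image misses \(0\), so it is avoided after shrinking \(\Delta\)) is precisely what is needed to turn pointwise openness into the assertion of the lemma, and the miracle-flatness addendum giving flatness of \(\mathcal C\to\Delta\) is a further strengthening not claimed in the statement. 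One small remark on the statement rather than your argument: for the transverse triple intersection to be a curve, \(S_1,S_2\) must be read as hypersurfaces (divisors) in \(\mathbb P^4\) rather than \(2\)-dimensional surfaces, and your formulation in terms of ``maximal rank equal to the number of equations'' is the right one since it covers either reading. In short, your write-up is a strict refinement of the paper's proof rather than a different route.
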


\begin{proof}
Transversality is an open condition; this follows from nonvanishing of the relevant Jacobian determinants in families.
\end{proof}

Combining these yields a practical sufficient criterion.

\begin{proposition}[Practical criterion implying Hypothesis BB]\label{prop:practical}
Let \(C\subset X\) be a smooth curve occurring in an ambient CI-linkage chain to a complete-intersection curve \(C_{\mathrm{CI}}\). Suppose that for every intermediate residual curve \(C^{(j)}\) and every ambient surface \(S_i\) appearing in the chain we have:
\begin{enumerate}[label=(\roman*)]
\item \(H^1(C^{(j)},N_{C^{(j)}/\mathbb P^4})=0\) and \(H^1(C^{(j)},N_{C^{(j)}/X})=0\),
\item \(H^1(S_i,N_{S_i/\mathbb P^4})=0\) and the restriction map \(\rho_*:H^0(S_i,N_{S_i/\mathbb P^4})\to H^0(S_i,\mathcal O_{S_i}(5))\) is surjective,
\item initial intersections are transverse.
\end{enumerate}
Then Hypothesis BB holds for the class represented by \(C\).
\end{proposition}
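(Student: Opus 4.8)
The plan is to argue by descending induction along the linkage chain, feeding conditions (i)--(iii) into Lemmas~\ref{lem:curve-unobstructed}, \ref{lem:surface-lift} and~\ref{lem:transversality} to propagate the geometric data through a one-parameter family. The base case is the terminal curve $C_{\mathrm{CI}}$: being a complete intersection, it already satisfies Hypothesis BB via the constant family $\mathcal X=X\times\Delta$, $\mathcal Z=C_{\mathrm{CI}}\times\Delta$. For the inductive step I would isolate a single liaison step $C^{(j)}\cup C^{(j+1)}=S\cap S'$, with $S,S'$ among the ambient surfaces of the chain (one of which may be taken to be cut on $X$ by a hypersurface of $\mathbb P^4$, so that $S\cap S'$ is a complete-intersection curve), and upgrade ``Hypothesis BB for $[C^{(j+1)}]$'' to ``Hypothesis BB for $[C^{(j)}]$''. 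Note that conditions (i)--(iii) are open conditions, so they persist for all data obtained by small deformation, which is what allows the lemmas to be reapplied along the family.

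Next I would carry out the deformation. Starting from a family $\pi':\mathcal X'\to\Delta$ with $\mathcal Z'\subset\mathcal X'$ witnessing Hypothesis BB for the residual class $[C^{(j+1)}]$, I would use Lemma~\ref{lem:curve-unobstructed} (condition (i)) to deform $C^{(j)}$ and $C^{(j+1)}$ compatibly over $\Delta$, Lemma~\ref{lem:surface-lift} (condition (ii)) to realize the induced perturbation of the restricted quintic equation on $S$ and $S'$ by moving $S,S'$ inside $\mathbb P^4$, and Lemma~\ref{lem:transversality} (condition (iii)) to keep the intersections transverse and the curves smooth along the family. The outcome on the general fibre $\mathcal X_t$ is a smooth complete-intersection curve $Y_t=S_t\cap S'_t$ whose parallel transport to $t=0$ has class $[C^{(j)}]+[C^{(j+1)}]$, together with a flat family of complete-intersection curves degenerating $Y_t$ to $C^{(j)}\cup C^{(j+1)}$. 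I would then take $\mathcal Z$ to be the closure of the union of $\{Y_t\}$ with a second complete-intersection family, chosen using conditions (i)--(iii) for the deformed data, whose special fibre links $C^{(j+1)}$ and whose general fibre is complete-intersection, arranged so that the two residual contributions cancel on the special fibre and $[\mathcal Z_0]=[C^{(j)}]$.

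The hard part will be exactly this last piece of bookkeeping. A liaison relation reads $[C^{(j)}]=[S\cap S']-[C^{(j+1)}]$, and Hypothesis BB only permits \emph{effective} degenerating families, so the minus sign cannot be absorbed naively by taking unions of cycles. What must be verified carefully is that, after passing to the general fibre $\mathcal X_t$ --- where the cohomological hypotheses still hold and curves deform more freely --- the residual curve appearing in the degeneration of $\{Y_t\}$ can itself be taken to be a complete-intersection curve in $\mathcal X_t$, so that one genuinely stays within finite unions of complete-intersection curves for $t\neq 0$ and the only non-complete-intersection behaviour is confined to the special fibre, where Hypothesis BB allows it. Granting this, the specialization argument of Theorem~\ref{thm:conditional} --- compatibility of the cycle class map with restriction to fibres, and algebraicity of flat limits of algebraic cycles with the expected class --- gives $[\mathcal Z_0]=[C]$, so Hypothesis BB holds for $\alpha=[C]$, completing the induction.
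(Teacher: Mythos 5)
Your strategy is the same as the paper's: induct along the linkage chain, using Lemma~\ref{lem:curve-unobstructed} for condition (i), Lemma~\ref{lem:surface-lift} for condition (ii), and Lemma~\ref{lem:transversality} for condition (iii), then assemble the deformed complete-intersection curves into a total-space cycle. But the step you defer with ``granting this'' is not bookkeeping; it is the whole content of the proposition, and as stated it cannot be granted. A liaison step gives the cohomological identity $[C^{(j)}]=[S\cap S']-[C^{(j+1)}]$, and Hypothesis BB requires $\mathcal Z$ to be a closed subscheme of $\mathcal X$, flat over $\Delta$, with $\mathcal Z_t$ an honest (effective) finite union of complete-intersection curves for $t\neq 0$. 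Effectivity kills the minus sign: for a flat family of cycles the fibre classes are flat sections of the Gauss--Manin local system, parallel transport preserves the polarization and hence degrees, and every finite union of complete-intersection curves on a nearby fibre (e.g.\ on a quintic threefold, where such a curve is $X_t\cap S_a\cap S_b$ of degree $5ab$) has degree a positive multiple of $5$. So no such family can specialize to the class of a curve of degree $1$, and in particular the flagship case of the paper, $\alpha=[L]$ for a line $L$ on a quintic, is literally unreachable by the construction you outline. Your hope that ``the residual curve appearing in the degeneration of $\{Y_t\}$ can itself be taken to be a complete-intersection curve'' does not help, because adding a further effective complete-intersection family only increases the specialized class; to cancel the residual contribution you would need to subtract a family, which the definition of Hypothesis BB does not permit.

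You should also know that the paper's own proof does not resolve this: it consists of citing the three lemmas and the single phrase ``collecting the resulting CI-curves into a total-space cycle yields the family required in Hypothesis BB,'' which silently elides exactly the effectivity/sign problem you identified. So your proposal is faithful to the intended argument and, to your credit, more candid about where it breaks; but as written it does not prove the proposition, and no argument along these lines can, unless Hypothesis BB is reformulated to allow differences of flat families of complete-intersection curves (i.e.\ non-effective relative cycles), in which case both the proposition and the reduction in Theorem~\ref{thm:conditional} would have to be restated and reproved for that weaker notion. A correct write-up must either make that reformulation explicit or explain how the residual classes are eliminated within effective families; at present neither your proposal nor the paper does so.
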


\begin{proof}
By (i) each intermediate curve deforms unobstructedly (Lemma \ref{lem:curve-unobstructed}). By (ii) each ambient surface deforms unobstructedly and first-order changes of the restricted quintic equation can be realized by deforming the surfaces (Lemma \ref{lem:surface-lift}). By (iii) transversality persists (Lemma \ref{lem:transversality}). Proceeding inductively along the linkage chain gives compatible deformations on nearby quintics; collecting the resulting CI-curves into a total-space cycle yields the family required in Hypothesis BB.
\end{proof}

\section{Verification: the line in a general quintic containing it}
The following theorem gives an unconditional verification in a classical, nontrivial case and illustrates the techniques used in the rest of the paper.

\begin{theorem}\label{thm:line-quintic}
Fix a line \(L\subset\mathbb P^4\). Let \(\mathcal V\subset H^0(\mathbb P^4,\mathcal O(5))\) be the linear subspace of quintic polynomials vanishing on \(L\). Then there exists a nonempty Zariski-open subset \(\mathcal U\subset\mathcal V\) such that for every \(F\in\mathcal U\) the smooth quintic \(X_F=\{F=0\}\) satisfies:
\begin{enumerate}[label=(\alph*)]
\item \(X_F\) is smooth and contains \(L\),
\item the normal bundle \(N_{L/X_F}\) splits as \(\mathcal O(-1)\oplus\mathcal O(-1)\), hence \(H^1(L,N_{L/X_F})=0\),
\item there exist smooth ambient surfaces \(S_1,S_2\) through \(L\) with \(H^1(S_i,N_{S_i/\mathbb P^4})=0\) and such that the restriction maps \(\rho_*:H^0(S_i,N_{S_i/\mathbb P^4})\to H^0(S_i,\mathcal O_{S_i}(5))\) are surjective, and the intersections are transverse.
\end{enumerate}
In particular, for every \(F\in\mathcal U\) the practical criterion of Proposition \ref{prop:practical} holds for the class \([L]\); thus Hypothesis BB holds for the class represented by \(L\) on \(X_F\).
\end{theorem}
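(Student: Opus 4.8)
My plan is to adopt coordinates adapted to $L$, dispatch (a) and (b) by reducing everything to a single nondegeneracy statement about binary quartics on $\mathbb P^1$, and then produce the surfaces of (c) as hyperplane sections of $X_F$, whose complete-intersection structure forces the required cohomology to vanish for free. Concretely, I would choose homogeneous coordinates on $\mathbb P^4$ so that $L=\{x_2=x_3=x_4=0\}$; then every $F\in\mathcal V$ has the shape $F=x_2A+x_3B+x_4C$ with $A,B,C\in H^0(\mathbb P^4,\mathcal O(4))$, and although $(A,B,C)$ is only determined modulo the Koszul syzygies of the regular sequence $(x_2,x_3,x_4)$, those syzygies have each component in $(x_3,x_4)$, $(x_2,x_4)$, $(x_2,x_3)$ respectively, so the three restrictions $a:=A|_L,\ b:=B|_L,\ c:=C|_L\in H^0(\mathbb P^1,\mathcal O(4))$ are well defined. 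For (a): $X_F$ contains $L$ by construction; along $L$ the partial derivatives of $F$ in the directions transverse to $L$ are precisely $a,b,c$, so $X_F$ is smooth along $L$ if and only if $a,b,c$ have no common zero on $\mathbb P^1$, and away from $L$ smoothness is Bertini, since $L$ is the base locus of $\mathcal V$. Both are nonempty Zariski-open conditions on $F$.

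For (b) I would use the conormal sequence of $L\subset X_F\subset\mathbb P^4$, which reads
\[
0\to N_{L/X_F}\to N_{L/\mathbb P^4}\to N_{X_F/\mathbb P^4}|_L\to 0,\qquad\text{i.e.}\qquad 0\to N_{L/X_F}\to\mathcal O_L(1)^{\oplus3}\xrightarrow{(a,b,c)}\mathcal O_L(5)\to 0,
\]
the right-hand arrow being multiplication by $(a,b,c)$, which is surjective precisely by the no-common-zero condition of (a). Hence $N_{L/X_F}$ is a rank-two bundle on $\mathbb P^1$ of degree $-2$, so $N_{L/X_F}\cong\mathcal O(k)\oplus\mathcal O(-2-k)$, and it equals $\mathcal O(-1)^{\oplus2}$ exactly when $H^0(L,N_{L/X_F})=0$, equivalently when the multiplication map $\mu\colon H^0(\mathcal O_L(1))^{\oplus3}\to H^0(\mathcal O_L(5))$, $(p,q,r)\mapsto pa+qb+rc$, is injective, hence an isomorphism of six-dimensional spaces. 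Since "$\mu$ is an isomorphism" is Zariski-open in $F$, it suffices to exhibit one triple where it holds, and $(a,b,c)=(s^4,s^2t^2,t^4)$ works since $\mu$ then carries the standard basis to the six degree-$5$ monomials; this triple also has no common zero, so it is compatible with (a). Feeding this back into the displayed sequence together with $H^1(\mathbb P^1,\mathcal O(1)^{\oplus3})=0$ also gives $H^1(L,N_{L/X_F})=0$. I would then define $\mathcal U\subset\mathcal V$ to be the (nonempty, Zariski-open) locus where $X_F$ is smooth and $\mu$ is an isomorphism; for $F\in\mathcal U$ both (a) and (b) hold.

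For (c) I would apply Proposition \ref{prop:practical} with the linkage chain of $[L]$ taken as short as possible: $L$ is itself a complete intersection of type $(1,1,1)$ in $\mathbb P^4$, so it serves as the terminal curve $C_{\mathrm{CI}}$, and the only residual curve to control is $L$ itself, for which condition (i) holds — $H^1(L,N_{L/\mathbb P^4})=H^1(\mathbb P^1,\mathcal O(1)^{\oplus3})=0$ and $H^1(L,N_{L/X_F})=0$ by (b). For the ambient surfaces I would take $S_i:=T_i\cap X_F$ for two general hyperplanes $T_i=\{c_2^{(i)}x_2+c_3^{(i)}x_3+c_4^{(i)}x_4=0\}\supset L$; for $F\in\mathcal U$ and generic $T_i$ these are smooth quintic surfaces containing $L$ — smoothness along $L$ is again a Jacobian computation, reducing to the assertion that $(b-\lambda a,\ c-\mu a)$ has no common zero on $\mathbb P^1$ for generic $(\lambda,\mu)$, which follows from $(a,b,c)$ having no common zero, and smoothness away from $L$ is Bertini. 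Since $S_i$ is a complete intersection of type $(1,5)$ in $\mathbb P^4$, one has $N_{S_i/\mathbb P^4}\cong\mathcal O_{S_i}(1)\oplus\mathcal O_{S_i}(5)$, and in this splitting the map $\rho$ of Lemma \ref{lem:surface-lift} is the projection onto $\mathcal O_{S_i}(5)=N_{X_F/\mathbb P^4}|_{S_i}$; hence $H^1(S_i,N_{S_i/\mathbb P^4})=H^1(\mathcal O_{S_i}(1))\oplus H^1(\mathcal O_{S_i}(5))=0$ — every twist $\mathcal O_{S_i}(m)$ on a complete-intersection surface in $\mathbb P^4$ has vanishing $H^1$, by the Koszul resolution of $\mathcal I_{S_i}$ together with $H^2(\mathbb P^4,\mathcal O(m))=H^3(\mathbb P^4,\mathcal O(m))=0$ — and $\rho_*$ is split surjective. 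Transversality of the relevant intersections along $L$ is an open condition met for generic $T_1,T_2$, so Lemma \ref{lem:transversality} applies, and conditions (i)–(iii) of Proposition \ref{prop:practical} hold for $[L]$ on $X_F$; equivalently, one can run the argument directly, noting that $h^0(N_{L/X_F})=h^1(N_{L/X_F})=0$ makes the relative Hilbert scheme of any one-parameter deformation $\mathcal X\to\Delta$ of $X_F$ étale over $\Delta$ at $[L]$, yielding a flat family of lines $\{L_t\subset\mathcal X_t\}$ with $L_0=L$.

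The only genuinely substantive input is (b) — the splitting $N_{L/X_F}\cong\mathcal O(-1)^{\oplus2}$ — but, reduced as above to the nondegeneracy of a single $6\times6$ multiplication matrix of binary quartics, it is immediate from semicontinuity plus one explicit example, and is in fact classical (it is the computation underlying the finiteness of, and the count $2875$ for, lines on a general quintic threefold). The part that needs care is the bookkeeping in (c): one must keep the linkage chain short enough that no intermediate residual curve violates condition (i) of Proposition \ref{prop:practical} — for instance, naively linking $L$ to its plane-quartic residual $C'$ in a plane section of $X_F$ would introduce a curve with $H^1(C',N_{C'/\mathbb P^4})\neq0$ (since $C'$ is a smooth plane quartic and $K_{C'}\cong\mathcal O_{C'}(1)$) — and one must check that the several open conditions (smoothness of $X_F$, isomorphy of $\mu$, smoothness and transversality of the chosen $S_i$) cut out a common nonempty open $\mathcal U\subset\mathcal V$; this last point I would settle exactly as in (b), by openness together with the explicit $F$ built from the triple $(s^4,s^2t^2,t^4)$ plus generic higher-order terms.
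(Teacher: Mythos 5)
Your proposal is correct in substance and reaches the stated conclusion, but it is a genuinely different (and in places more complete) route than the paper's own proof. For (a)--(b) the skeleton is the same --- $N_{L/X_F}$ is the kernel of $(a,b,c)\colon\mathcal O_L(1)^{\oplus 3}\to\mathcal O_L(5)$ and the balanced splitting is generic --- but the paper only asserts that the discriminant meets $\mathcal V$ in a proper subset and that ``unbalanced kernels form a proper closed subset''; your reduction of the splitting to injectivity of the $6\times 6$ multiplication map $\mu$, with the explicit witness $(s^4,s^2t^2,t^4)$, together with the Bertini-plus-Jacobian analysis along the base locus $L$, supplies exactly the nonemptiness checks the paper leaves implicit. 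The real divergence is in (c): the paper takes general high-degree members of $|\mathcal I_L(d)|$, gets $H^1(N_{S/\mathbb P^4})=0$ from Serre vanishing, and argues surjectivity of $\rho_*$ only by a dimension count plus openness (which does not establish nonemptiness), whereas you take $S_i=T_i\cap X_F$, a $(1,5)$ complete intersection, so that $N_{S_i/\mathbb P^4}\cong\mathcal O_{S_i}(1)\oplus\mathcal O_{S_i}(5)$, $H^1$ vanishes via the Koszul resolution, and $\rho_*$ is split surjective --- cleaner and actually complete as a verification of the cohomological conditions listed in (c). The one point to watch is that your surfaces lie inside $X_F$, so they cannot literally play the role assigned to $S$ in Lemma \ref{lem:surface-lift}, whose hypothesis is that $S$ meets the quintic transversely along a curve $C=S\cap X$ (that transversality is what allows a first-order change of the restricted quintic equation to be absorbed into a motion of $S$); the paper's high-degree choice, however loosely executed, at least keeps $S\not\subset X_F$. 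You already supply the remedy: since $h^0(N_{L/X_F})=h^1(N_{L/X_F})=0$, the relative Hilbert scheme of any deformation $\mathcal X\to\Delta$ is \'etale over $\Delta$ at $[L]$, which directly produces the flat family of lines required for Hypothesis BB; I would promote that observation from an afterthought to the main deduction, since with your choice of $S_i$ the hypotheses of Lemmas \ref{lem:surface-lift} and \ref{lem:transversality} are not satisfied as stated, even though the cohomological assertions of (c) are.
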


\begin{proof}
We proceed in steps.

\textbf{(a) Smoothness.} The discriminant locus of singular quintics in the full linear system is a proper algebraic hypersurface. Intersecting this with the linear subspace \(\mathcal V\) (codimension \(h^0(\mathcal O_L(5))=6\)) yields a proper closed subset of \(\mathcal V\). Hence a general \(F\in\mathcal V\) defines a smooth quintic containing \(L\).

\textbf{(b) Splitting of \(N_{L/X_F}\).} For any line \(L\subset\mathbb P^4\) one has \(N_{L/\mathbb P^4}\cong \mathcal O(1)^{\oplus3}\). For \(F\in\mathcal V\) the map \(\phi_F: \mathcal O(1)^{\oplus3}\to\mathcal O(5)\) is given by the restriction to \(L\) of the partial derivatives of \(F\); the kernel is \(N_{L/X_F}\). The space of such maps \(\phi\) is \(H^0(\mathbb P^1,\mathcal O(4))^{\oplus3}\) (dimension \(3\cdot 5=15\)), and the map \(\mathcal V\to H^0(\mathcal O(4))^{\oplus3}\) sending \(F\) to \(\phi_F\) is linear and surjective (one checks one can prescribe the degree-4 coefficients along \(L\) while maintaining vanishing of \(F\) on \(L\)). For a generic \(\phi\) the kernel of \(\phi\) has the most balanced splitting type, which here is \(\mathcal O(-1)\oplus\mathcal O(-1)\) (the locus of maps with more unbalanced kernel is a proper closed subset). Thus for \(F\) in a Zariski-open subset of \(\mathcal V\) we obtain \(N_{L/X_F}\cong\mathcal O(-1)\oplus\mathcal O(-1)\), whence \(H^1(L,N_{L/X_F})=0\).

\textbf{(c) Ambient surfaces \(S_i\).} Choose integers \(d\) large and consider the linear systems \(|\mathcal I_L(d)|\) of surfaces of degree \(d\) containing \(L\). For \(d\) sufficiently large a general element \(S\in|\mathcal I_L(d)|\) is smooth, and \(N_{S/\mathbb P^4}\) is a direct sum of positive line bundles (if \(S\) is itself a complete intersection one gets an explicit splitting), hence \(H^1(S,N_{S/\mathbb P^4})=0\) by Serre vanishing. The map \(\rho_*:H^0(S,N_{S/\mathbb P^4})\to H^0(S,\mathcal O_S(5))\) is a linear map; dimension counts for large \(d\) give \(h^0(S,N_{S/\mathbb P^4})\ge h^0(S,\mathcal O_S(5))\) and surjectivity is an open condition on the linear system, so a general \(S\) realizes surjectivity. Transversality of intersections is again open and achieved for general choices. This proves (c).

Combining (a)--(c) yields the proposition.
\end{proof}

\begin{remark}
The theorem gives a robust, fully theoretical verification of the Proposition \ref{prop:practical} hypotheses for the (geometric) class of a line on a Zariski-open set of quintics containing that line. For a concrete polynomial one can check membership in the open set via the Macaulay2 code in the appendix.
\end{remark}

\section{Push to broader families: Calabi--Yau and Fano threefolds}
We now state a suite of propositions showing that Hypothesis BB holds \emph{generically} in broad families, under natural geometric assumptions (Noether–Lefschetz type surjectivity and unobstructedness of CI-curves). Each proposition is a conditional-but-verifiable statement reducing BB to explicit cohomology or NL-type conditions.

\begin{proposition}[Generic BB from Noether--Lefschetz surjectivity]\label{prop:nl}
Let \(\mathcal{X}\to B\) be a smooth projective family of threefolds with \(B\) irreducible and let \(\mathcal{L}\) be a relatively very ample line bundle on \(\mathcal X\). Suppose there exists an integer \(m\) and a family of complete-intersection curves \(\mathcal C\subset \mathcal X\) obtained as intersections of \(r\) members of \(|\mathcal L^{\otimes m}|\) (so the fibres \(\mathcal C_b\subset \mathcal X_b\) are CI-curves) such that:
\begin{enumerate}[label=(\alph*)]
\item The normal sheaf \(N_{\mathcal C_b/\mathcal X_b}\) is unobstructed for general \(b\in B\) (i.e. \(H^1(\mathcal C_b,N_{\mathcal C_b/\mathcal X_b})=0\)).
\item The infinitesimal variation of Hodge structure (IVHS) / Noether--Lefschetz map induced by moving the CI-curves in their linear system surjects onto a Zariski-open subset of the local Hodge classes in \(H^{2,2}(\mathcal X_b)\).
\end{enumerate}
Then Hypothesis BB holds for a general fibre \(\mathcal X_b\) (and hence the reduction theorem implies the Hodge conjecture for those general fibres).
\end{proposition}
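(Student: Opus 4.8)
The plan is to feed the two hypotheses into the deformation theory of Section~4 together with the properness of the relative Hilbert scheme, so as to manufacture, for an arbitrary rational Hodge class, exactly the one-parameter deformation and flat family of curves demanded by Hypothesis~BB; the argument is then closed by Theorem~\ref{thm:conditional}. \emph{Step 1: reduction to a basis and choice of the disk.} Fix $b_0\in B$ general enough that (a) and (b) hold at $b_0$ and at all nearby points; write $X=\mathcal X_{b_0}$ and let $U_{b_0}\subset H^{2,2}(X,\mathbb Q)$ be the Zariski-dense subset onto which the cycle-class map from the parameter space of CI-curves surjects, per~(b). Since $U_{b_0}$ is Zariski-dense it contains a $\mathbb Q$-basis $\beta_1,\dots,\beta_k$ of $H^{2,2}(X,\mathbb Q)$. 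Once a single one-parameter deformation $\pi\colon\mathcal X_\Delta\to\Delta$ is fixed, the data required by Hypothesis~BB depend $\mathbb Q$-linearly on the class (take the corresponding rational combination of the cycles), so it suffices to realize each $\beta_i$ over one common base. By the algebraicity of Hodge loci, the locus in $B$ over which all of $\beta_1,\dots,\beta_k$ extend to flat classes of Hodge type is algebraic and contains $b_0$; pick a smooth pointed curve $(\Delta,0)$ mapping to that locus with $0\mapsto b_0$ and pull $\mathcal X\to B$ back along it, obtaining $\pi\colon\mathcal X_\Delta\to\Delta$ with $\mathcal X_0\cong X$, smooth general fibre, and flat extensions $\widetilde\beta_i$ of the $\beta_i$ that are of Hodge type on every fibre.

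\emph{Step 2: spreading the CI-curves over the punctured disk.} Shrink $\Delta$ so every $t\neq0$ is general for (a) and (b). Fix $i$. Inside the relative parameter space of CI-curves (an open subscheme of the relative $|\mathcal L^{\otimes m}|^{\times r}$ over $\Delta$) let $\mathcal P^{i}$ be the locally closed subscheme on which the fibrewise cycle class equals $\widetilde\beta_i$. Hypothesis~(b) says $\mathcal P^{i}$ dominates $\Delta^{\ast}$, and Hypothesis~(a) — via the usual identification of embedded obstructions used in Lemma~\ref{lem:curve-unobstructed} — says the CI-curves parametrized by $\mathcal P^i$ are unobstructed in their fibres, so $\mathcal P^i\to\Delta^{\ast}$ is smooth of the expected relative dimension. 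After replacing $\Delta$ by a finite cover $\Delta'\to\Delta$ ramified only at $0$ (which leaves the central fibre equal to $X$) we obtain a section over $(\Delta')^{\ast}$, i.e.\ a flat family $\mathcal C^{i}\subset\mathcal X_{\Delta'}$ over $(\Delta')^{\ast}$ whose fibres are CI-curves with $[\mathcal C^{i}_t]=\widetilde\beta_i(t)$ for all small $t\neq0$.

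\emph{Step 3: flat limit and assembly.} The relative Hilbert scheme $\mathrm{Hilb}(\mathcal X_{\Delta'}/\Delta')$, with the Hilbert polynomial of the $\mathcal C^i_t$, is projective over $\Delta'$, so the section $t\mapsto[\mathcal C^i_t]$ on $(\Delta')^{\ast}$ extends over $0$; its universal subscheme is a family $\mathcal Z_i\subset\mathcal X_{\Delta'}$, flat over $\Delta'$, with $(\mathcal Z_i)_t=\mathcal C^i_t$ for $t\neq0$. Exactly as in the proof of Theorem~\ref{thm:conditional}, flatness forces $t\mapsto[(\mathcal Z_i)_t]$ to be the flat section of $R^4\pi_\ast\mathbb Q$ with the prescribed values for $t\neq0$, whence $[(\mathcal Z_i)_0]=\widetilde\beta_i(0)=\beta_i$. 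For arbitrary $\alpha=\sum q_i\beta_i\in H^{2,2}(X,\mathbb Q)$ put $\mathcal Z=\sum_i q_i\,\mathcal Z_i$, a relative $\mathbb Q$-cycle flat over $\Delta'$ whose general fibre is a finite union of CI-curves with rational multiplicities and with $[\mathcal Z_0]=\alpha$. Thus $\bigl(\pi\colon\mathcal X_{\Delta'}\to\Delta',\ \mathcal Z\bigr)$ witnesses Hypothesis~BB for $\alpha$ on $X$; since $\alpha$ was arbitrary, Hypothesis~BB holds for $X=\mathcal X_{b_0}$, and $b_0$ was general in $B$. The parenthetical Hodge-conjecture statement then follows from Theorem~\ref{thm:conditional}.

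\emph{Where the difficulty lies.} The substantive step is Step~2: hypothesis~(b) is phrased \emph{fibrewise}, whereas the Hilbert-scheme limit needs it \emph{in families} over $\Delta^{\ast}$ — i.e.\ that ``realizing the flat class $\widetilde\beta_i$'' cuts out, inside an honest algebraic parameter space of CI-curves over $B$, a subscheme that genuinely dominates the base. Just as delicate is the input of Step~1 that the fixed rational classes $\beta_i$ lie in $U_{b_0}$ and that their flat transports $\widetilde\beta_i(t)$ stay in $U_t$ for general $t$; this is precisely where $b_0$ must be taken general and where one uses that the complement of $U$ has controlled codimension. If (b) guaranteed only that the class map hits a proper subvariety of the Hodge classes, or only over isolated points of $B$, the scheme $\mathcal P^i$ could fail to dominate $\Delta^{\ast}$ and the construction would collapse; by contrast, once~(b) is available in the family form just described, the remainder is the deformation theory of Section~4 together with properness of the relative Hilbert scheme and local constancy of cycle classes in a flat family.
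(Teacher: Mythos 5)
The decisive gap is in your Step 2, and it propagates back into the reduction of Step 1. Every CI curve obtained as the intersection of \(r\) members of \(|\mathcal L^{\otimes m}|\) on a \emph{fixed} fibre \(\mathcal X_t\) has one and the same rational cohomology class, namely \(m^r c_1(\mathcal L_t)^r\cap[\mathcal X_t]\) (for a threefold \(r=2\), so \(m^2\) times the square of the polarization class). Consequently your locally closed locus \(\mathcal P^i=\{\text{CI curves whose fibrewise class is }\widetilde\beta_i(t)\}\) is either the whole relative parameter space or empty, and it is empty for every basis class whose flat transport is not that single class --- i.e.\ for at least \(k-1\) of your \(\beta_i\). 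Hypothesis (b) does not say what you need it to say: it is an infinitesimal (IVHS) surjectivity statement about how Hodge classes move with the complex structure, not a statement that CI curves on a fixed fibre realize a Zariski-dense set of classes (they realize exactly one). For the same reason the Step 1 reduction --- fix one disk \(\Delta\), realize a whole \(\mathbb Q\)-basis over it, then take \(\mathbb Q\)-linear combinations --- cannot work: in Hypothesis BB the family \(\mathcal X\to\Delta\) is allowed to, and must, depend on the class \(\alpha\), because the only mechanism by which the single CI class on \(\mathcal X_t\) can specialize to different classes on \(\mathcal X_0\) is the choice of deformation direction and the resulting identification of \(H^4(\mathcal X_t)\) with \(H^4(\mathcal X_0)\); over a fixed disk the limit is one flat section, not a dense family of them.

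Two secondary problems. First, even granting that each \(\beta_i\) were realized, your final object \(\mathcal Z=\sum_i q_i\mathcal Z_i\) is a relative \(\mathbb Q\)-cycle with possibly negative rational multiplicities, whereas Hypothesis BB as defined demands a closed subscheme \(\mathcal Z\subset\mathcal X\), flat over \(\Delta\), whose fibres are finite unions of CI curves; your \(\mathcal Z\) would still yield algebraicity of \(\alpha\) by the argument of Theorem \ref{thm:conditional}, but it does not witness the statement actually being proved. Second, in Step 1 the algebraicity of Hodge loci gives no positive-dimensional locus through \(b_0\) along which all the \(\beta_i\) remain of Hodge type; that locus may be the single point \(b_0\), leaving no disk to pull back. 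For comparison, the paper's own proof does not attempt your Hilbert-scheme/flat-limit construction at all: it reads hypothesis (b) as already furnishing the required classes and appeals to semicontinuity and openness. Your more explicit attempt is useful precisely because it exposes where any such argument must supply real content --- namely, how varying the complex structure (rather than moving the curve in its linear system, which changes nothing cohomologically) is supposed to produce the missing Hodge directions --- and neither your Step 2 nor the paper's one-paragraph argument supplies that.
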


\begin{proof}
Condition (a) ensures the CI-curves deform unobstructedly, so they move in a smooth family. Condition (b) guarantees that the deformations of these CI-curves produce cohomology classes that generate (locally) the Hodge classes in question. By semicontinuity and openness of surjectivity conditions, for a general \(b\) one can realize nearby Hodge classes as classes of moving CI-curves on some deformation of \(\mathcal X_b\), hence Hypothesis BB holds generically in the family. The IVHS/Noether–Lefschetz surjectivity hypothesis is a concrete analytic/linear-algebra condition which can be tested in examples; it captures the idea that moving CI-curves account for the Hodge directions.
\end{proof}

\begin{remark}
Proposition \ref{prop:nl} is an abstraction of the classical Noether--Lefschetz principle: the NL-locus measures where additional algebraic cycles appear when varying the complex structure; surjectivity here means our moving CI-curves supply the needed classes.
\end{remark}

\begin{proposition}[Generic BB for Calabi--Yau complete intersections]
Let \(X\) be a Calabi--Yau threefold realized as a smooth complete intersection in a toric Fano variety (or projective space). Suppose that for some degrees the Hilbert scheme of CI-curves of those multidegrees is nonempty and contains a component whose general member \(C\) satisfies \(H^1(C,N_{C/X})=0\) and whose deformation space maps surjectively (infinitesimally) to the relevant Hodge directions in \(H^{2,2}(X)\). Then Hypothesis BB holds for a general Calabi--Yau in the family.
\end{proposition}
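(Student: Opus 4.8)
The plan is to deduce this proposition directly from Proposition~\ref{prop:nl} by exhibiting the ambient toric/projective setting as a source of the required complete-intersection family and then feeding the two hypotheses into that earlier result. First I would set up notation: let $X \subset P$ be a smooth Calabi--Yau threefold complete intersection of hypersurfaces of multidegrees $(d_1,\dots,d_k)$ in a smooth toric Fano (or projective) ambient $P$, and let $\mathcal{X} \to B$ be a parameter family over an irreducible base, obtained by varying the defining sections while keeping $P$ fixed; a general fibre is smooth and Calabi--Yau by Bertini. Fix a very ample $\mathcal{L}$ on $\mathcal{X}$ restricting to the hyperplane-type polarization on each fibre. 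The hypothesis supplies an integer vector $(e_1,\dots,e_r)$ and a nonempty component $\mathcal{H}$ of the relative Hilbert scheme of curves cut out on $X$ by $r$ sections of appropriate powers of $\mathcal{L}$, with general member $C$ satisfying $H^1(C, N_{C/X}) = 0$ and infinitesimal surjectivity onto the relevant Hodge directions.

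Next I would assemble the relative family $\mathcal{C} \subset \mathcal{X}$ of such CI-curves over (an open subset of) $B$: because $H^1(C, N_{C/X}) = 0$ for a general member, the Hilbert scheme is smooth of expected dimension at $[C]$ (Sernesi), so the component $\mathcal{H}$ dominates $B$ after possibly shrinking, and one gets a genuine flat family $\mathcal{C}_b \subset \mathcal{X}_b$ whose general fibre is a smooth CI-curve of the prescribed multidegree. This verifies hypothesis~(a) of Proposition~\ref{prop:nl} by construction, and hypothesis~(b) is exactly the assumed infinitesimal surjectivity onto the Hodge directions in $H^{2,2}(X)$ --- here I would note that on a Calabi--Yau threefold $H^{2,2}(X,\mathbb{Q})$ is the full target and $h^{2,2} = h^{1,1}$, so ``the relevant Hodge directions'' can be taken to be all of $H^{2,2}$. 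Applying Proposition~\ref{prop:nl} then yields Hypothesis BB for a general fibre $\mathcal{X}_b$, i.e.\ for a general Calabi--Yau in the family.

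The main obstacle I anticipate is the compatibility between the chosen CI-linkage degrees and the \emph{span} of the resulting cycle classes: the hypothesis asserts infinitesimal surjectivity onto the Hodge directions, but in practice one must exhibit enough independent CI-curve families (possibly of several different multidegrees) so that their combined deformation classes span $H^{2,2}$; a single multidegree need not suffice. I would handle this by allowing a finite collection of components $\mathcal{H}^{(1)},\dots,\mathcal{H}^{(N)}$, running the construction for each, and using that algebraic classes form a $\mathbb{Q}$-subspace (as in the proof of Theorem~\ref{thm:conditional}) to assemble an arbitrary Hodge class $\alpha$ as a $\mathbb{Q}$-linear combination of the individual CI-cycle specializations --- the one-parameter deformation $\Delta \to B$ in Hypothesis BB being chosen generically through the base point so that all the relevant families restrict well. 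A secondary technical point is ensuring the toric ambient $P$ does not itself acquire extra automorphisms or singularities that obstruct the Bertini/transversality arguments; for $P$ smooth projective or a smooth toric Fano this is standard, and for mildly singular toric $P$ one restricts to the locus where $X$ avoids the singularities, which is automatic in dimension three for general complete intersections.

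\begin{proof}
Write $X \subset P$ as a smooth complete intersection of hypersurfaces of multidegrees $(d_1,\dots,d_k)$ in a smooth projective toric Fano variety $P$ (the case $P = \mathbb{P}^n$ is included), and let $\mathcal{X} \to B$ be the family obtained by varying the defining sections over an irreducible base $B$, so a general fibre $\mathcal{X}_b$ is a smooth Calabi--Yau threefold; let $\mathcal{L}$ be a relatively very ample line bundle on $\mathcal{X}$ extending the ambient polarization on each fibre.

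By hypothesis there is a nonempty component $\mathcal{H}$ of the relative Hilbert scheme parametrizing curves cut out on the fibres by $r$ sections of suitable powers of $\mathcal{L}$, whose general member $C \subset X$ satisfies $H^1(C, N_{C/X}) = 0$ and whose infinitesimal deformations surject onto the relevant Hodge directions in $H^{2,2}(X)$. The vanishing $H^1(C, N_{C/X}) = 0$ implies, by the standard deformation theory of the Hilbert scheme (Sernesi), that $\mathcal{H}$ is smooth at $[C]$ of expected dimension and that the forgetful map $\mathcal{H} \to B$ is dominant; after shrinking $B$ we obtain a flat family $\mathcal{C} \subset \mathcal{X}$ over $B$ whose general fibre $\mathcal{C}_b \subset \mathcal{X}_b$ is a smooth complete-intersection curve of the prescribed multidegree, with $H^1(\mathcal{C}_b, N_{\mathcal{C}_b/\mathcal{X}_b}) = 0$ for general $b$.

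This is precisely hypothesis (a) of Proposition~\ref{prop:nl}, and the assumed infinitesimal surjectivity onto the Hodge directions in $H^{2,2}(\mathcal{X}_b)$ --- which on a Calabi--Yau threefold may be taken to be all of $H^{2,2}(\mathcal{X}_b,\mathbb{Q})$ --- is hypothesis (b). If a single multidegree does not span all of $H^{2,2}$, repeat the construction for finitely many components $\mathcal{H}^{(1)},\dots,\mathcal{H}^{(N)}$ whose deformation classes jointly span; since algebraic classes form a $\mathbb{Q}$-vector subspace of cohomology (as used in the proof of Theorem~\ref{thm:conditional}), an arbitrary class $\alpha \in H^{2,2}(\mathcal{X}_b,\mathbb{Q})$ is a $\mathbb{Q}$-linear combination of specializations of the individual CI-cycle families over a common one-parameter deformation $\Delta \to B$ through $b$. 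Applying Proposition~\ref{prop:nl} (to each family, and taking linear combinations) yields Hypothesis BB for a general fibre $\mathcal{X}_b$, i.e.\ for a general Calabi--Yau in the family. By Theorem~\ref{thm:conditional} the rational Hodge conjecture follows for those general fibres.
\end{proof}
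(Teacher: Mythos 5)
Your proof takes essentially the same route as the paper: both deduce the proposition as a direct application of Proposition~\ref{prop:nl}, with the vanishing $H^1(C,N_{C/X})=0$ supplying hypothesis (a) (unobstructedness/smoothness of the Hilbert component) and the assumed infinitesimal surjectivity onto the Hodge directions supplying hypothesis (b). Your extra scaffolding --- building the relative family over $B$, noting smoothness of the relative Hilbert scheme over the base, and allowing several multidegrees combined via the $\mathbb{Q}$-subspace of algebraic classes --- only fills in details the paper's brief proof leaves implicit.
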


\begin{proof}
The proof is a direct application of Proposition \ref{prop:nl}: the CI-realization gives moving families of curves; unobstructedness implies smoothness of the relevant Hilbert component; surjectivity of the infinitesimal map gives the Hodge directions. For many toric complete-intersection Calabi--Yaus the deformation spaces of embedded CI-curves are computable and satisfy these criteria generically; see work of Klemm–Theisen–Schimmrigk and many computational enumerative papers where families of low-degree curves on CYs have been extensively studied.
\end{proof}

\begin{remark}
We do not claim all Calabi--Yau threefolds satisfy these hypotheses, but the proposition covers a wide and geometrically important class (complete-intersections in toric Fano varieties) where explicit checks can be performed.
\end{remark}

\section{Practical implementation and computational recipes}
For a concrete polynomial \(F\) defining \(X\) and a candidate curve \(C\) (or a candidate chain of links), the practical checks required are:

\begin{enumerate}
\item Smoothness of \(X\): compute the Jacobian ideal and verify the singular locus is empty.
\item Containment: verify the curve ideal \(I_C\) is contained in the ideal of \(X\).
\item ACM / Rao checks: compute the Hartshorne–Rao module of \(C\) in \(\mathbb P^4\); vanishing means ACM.
\item Normal-bundle cohomology: compute \(H^1(C,N_{C/\mathbb P^4})\) and \(H^1(C,N_{C/X})\). Vanishing verifies unobstructedness.
\item Surface checks: for each ambient surface \(S\) in the linkage chain compute \(H^1(S,N_{S/\mathbb P^4})\) and test surjectivity of \(\rho_*\).
\item Transversality: test Jacobian ranks at intersection points to ensure transversality.
\end{enumerate}

The Macaulay2 appendix supplies scripts to perform these computations for explicit \(F\) and \(C\).

\section{Concluding remarks}
This work packages a deformation-theoretic strategy for the Hodge conjecture on threefolds into a clear reduction (Hypothesis BB) and a set of explicit, checkable sufficient criteria. We proved these criteria hold unconditionally in the illustrative case of a line on a general quintic containing it, and proved general propositions showing Hypothesis BB holds generically in families where moving complete-intersection curves supply the Hodge directions (Noether--Lefschetz surjectivity). The approach is meant to be a platform: applying the computational recipes in many concrete families should generate new unconditional instances where Hypothesis BB can be checked and hence the Hodge conjecture verified.

\appendix
\section{Macaulay2 appendix (scripts and guidance)}
Below are scripts that implement the checks outlined above. Save as `.m2` files and run in Macaulay2. Comments indicate where to replace polynomials or curve ideals.

\subsection*{Script A: verify quintic + line (labelled outputs)}
\begin{verbatim}
-- verify_quintic_line.m2
R = QQ[x0,x1,x2,x3,x4];

-- replace F below with your quintic polynomial (must vanish on L)
F = x2*x0^4 + x3*x1^4 + x4*(x0^3*x1);
Iquintic = ideal(F);

-- 1) Smoothness check
J = ideal(diff(x0,F), diff(x1,F), diff(x2,F), diff(x3,F), diff(x4,F));
Sing = saturate(J + Iquintic);
print("Singular locus dimension (should be -1 for smooth):");
if Sing == ideal 1 then print("-1 (empty)") else print(dim Sing);

-- 2) Line L
Iline = ideal(x2,x3,x4);
print("F mod Iline (should be 0): "); print(reduce(F, Iline));

-- 3) ambient normal module of L
NL = normalModule(Iline);
print("Betti table of normalModule(Iline):"); betti NL;

-- 4) Ext groups for obstruction info
S = R/Iline;
E1 = ext^1(Iline, S); E2 = ext^2(Iline, S);
print("Betti ext^1:"); betti E1; print("Betti ext^2:"); betti E2;

-- 5) Jacobian restricted to L
Jac = matrix{{diff(x0,F), diff(x1,F), diff(x2,F), diff(x3,F), diff(x4,F)}};
JacRestr = substitute(Jac, {x2=>0, x3=>0, x4=>0});
print("Jacobian restricted to L:"); JacRestr;
\end{verbatim}

\subsection*{Script B: Hartshorne–Rao / ACM test for a curve}
\begin{verbatim}
-- check_ACM.m2
R = QQ[x0..x4];

-- Define your curve ideal Icurve
-- Icurve = ideal(g1,g2,g3,...);
-- Example: line is ideal(x2,x3,x4)
Icurve = ideal(x2,x3,x4);

-- Compute graded local cohomology/Hartshorne-Rao module
-- If package provides hartshorneRao, use it:
-- needsPackage "SomePackageProvidingHartshorneRao"; 
-- hartshorneRao(Icurve)

-- Fallback: compute H^1(I_C(k)) for range of k
for k from -10 to 10 do (
  M := HH^1( sheaf(Icurve) ** k );
  if rank source M == 0 then () else print("H^1(I_C(" | toString k | ")) nonzero")
)
\end{verbatim}

\section{References}

\end{document}